\documentclass[12pt, reqno]{amsart}
 \usepackage{amsmath, amsthm, amscd, amsfonts, amssymb, graphicx, xcolor, 
 }
\usepackage{mathrsfs}
\usepackage{enumerate}

\usepackage[
  a4paper,
  margin=2.5cm,
  headsep=6mm
]{geometry}

\newtheorem{theorem}{Theorem}[section]
\newtheorem{lemma}[theorem]{Lemma}

\theoremstyle{definition}

\theoremstyle{remark}

\numberwithin{equation}{section}

\newcommand{\N}{\mathbb{N}} 

\newcommand{\C}{\mathbb{C}}

\usepackage[bookmarksnumbered, colorlinks]{hyperref}
\hypersetup{linkcolor=red, anchorcolor=green, citecolor=cyan, urlcolor=red, filecolor=magenta, pdftoolbar=true}

\begin{document}

\title[\tiny Frequently hypercyclic sequences of differential operators]{Frequently hypercyclic sequences of differential operators on the space of entire functions}


\author[Bernal]{L.~Bernal Gonz\'alez}
\address[L. Bernal-Gonz\'alez]{\mbox{}\newline \indent Departamento de An\'{a}lisis Matem\'{a}tico
\newline \indent Facultad de Matem\'{a}ticas
\newline \indent
	Instituto de Matem\'aticas Antonio de Castro Brzezicki
\newline \indent Universidad de Sevilla
\newline \indent Avenida Reina Mercedes s/n, 41012-Sevilla (Spain).}
\email{lbernal@us.es}


\author[Calder\'on]{M.C.~Calder\'on-Moreno}
\address[M.C.~Calder\'on-Moreno]{\mbox{}\newline \indent Departamento de An\'{a}lisis Matem\'{a}tico \newline
\indent Facultad de Matem\'{a}ticas \newline \indent
	Instituto de Matem\'aticas Antonio de Castro Brzezicki \newline
\indent Universidad de Sevilla \newline
\indent Avenida Reina Mercedes s/n, 41012-Sevilla (Spain).}
\email{mccm@us.es}


\author[Prado]{J.A.~Prado-Bassas}
\address[J.A.~Prado-Bassas]{\mbox{}\newline \indent Departamento de An\'{a}lisis Matem\'{a}tico \newline
\indent Facultad de Matem\'{a}ticas \newline \indent
	Instituto de Matem\'aticas Antonio de Castro Brzezicki \newline
\indent Universidad de Sevilla \newline
\indent Avenida Reina Mercedes s/n, 41012-Sevilla (Spain).}
\email{bassas@us.es}


\subjclass[2020]{30D15, 30K15, 47A16, 47B91}

\keywords{Differential operator, entire functions, hypercyclic sequence of operators, frequent hypercyclicity, Borel transform}


\begin{abstract}
A criterion to obtain frequent hypercyclicity for a sequence of convolution operators on the space of entire functions on the complex plane is provided.
The criterion involves that the generating functions of the operators do not vanish on an appropriate annulus, in the boundary of which the modulus of each term of the sequence is in some sense controlled by the preceding ones or the following ones.
\end{abstract}


\maketitle


\section{Introduction.}

Hypercyclicity is a subject in functional analysis that has been thoroughly investigated for the last four decades.
It deals with the search for vectors whose orbits under an operator or sequence of operators are dense in the supporting space.
For background on this topic the reader is referred to the survey \cite{Gr1} and the books \cite{bayartM,grosseP}.
An interesting class of operators to be studied under this point of view is the one of differential operators on the space of entire functions
on the complex plane $\C$. In this note, we shall consider a sequence of differential operators and study its frequent hypercyclicity --a stronger form of hypercyclicity, see below-- under a set of reasonable assumptions.

Let us fix the pertinent terminology. Let $X,Y$ be two linear topological spaces (in many cases, $X = Y$), $T_n :X \to Y$ ($n \in \N$) be a sequence of continuous linear mappings, and $x_0 \in X$. Then $x_0$ is said to be {\it hypercyclic} or {\it universal} for $(T_n)$ whenever its orbit $\{T_n x_0: \, n \in \N\}$ under $(T_n)$ is dense in $Y$. The family $(T_n)$ is called {\it hypercyclic} whenever it has a hypercyclic vector. It is
plain that, in order that a sequence $(T_n)$ can be hypercyclic, $Y$ must be separable. If $T:X \to X$ is an operator (= continuous linear selfmapping) on $X$, then a
vector $x \in X$ is said to be {\it hypercyclic} for $T$ provided that it is hypercyclic for the sequence $(T^n)$ of iterates of $T$, i.e., $T^n = T \circ T \circ \cdots \circ T$
($n$-fold). The operator $T$ is {\it hypercyclic} when there is a hypercyclic vector for $T$.

In this paper, we are specially interested in differential operators defined on the Fr\'echet space $H(\C )$ of all entire functions, endowed with the topology of uniform convergence in compacta. An entire function $\Phi (z) = \sum_{n=0}^\infty a_n z^n$ is said to be of exponential type whenever there
exist positive constants $A$ and $B$ such that $|\Phi (z)| \leq A e^{B|z|}$ $(z \in \C)$. The class of these functions will be denoted by ${\mathcal E}$.
It is easy to realize (see, for instance, \cite{Dic,BeG}) that if $\Phi$ is an entire function with exponential type as above, then the series
$\Phi (D) = \sum_{n=0} a_n D^n$ defines an operator on $H(\C)$, acting in the following way:
$$
(\Phi (D)f)(z) = \sum_{n=0} a_n f^{(n)}(z) \, \hbox{ \ for all \ } z \in \C .
$$
So $\Phi (D)$ defines, under the latter conditions, a (generally, infinite order) linear
differential operator with constant coefficients. It is shown
in \cite{GoS} that an operator $L$ on $H(\C)$
commutes with every translation operator $\tau_a$ ($\tau_a f := f(\cdot + a), \, a \in \C$) if and only if $L$ commutes with the
derivative operator $D$, and if and only if $L = \Phi (D)$ for some entire
function $\Phi \in \mathcal E$. Godefroy and Shapiro proved in \cite{GoS} the hypercyclicity of every non-scalar operator on $H(\C )$ commuting with translations.
These operators are also called convolution operators.
In particular, by choosing $\Phi (z) = e^{az}$ and $\Phi (z) = z$, one recovers the hypercyclicity of each nontrivial translation operator and of the derivative operator, that had been respectively obtained by Birkhoff \cite{Bir} and MacLane \cite{Mac}. As far as we know, the hypercyclicity of {\it sequences} of convolution operators, being not necessarily the iterates of a single one, was analyzed in \cite{Be3} for the first time.

A stronger form of hypercyclicity is that of frequent hypercyclicity. This notion was coined by Bayart and Grivaux \cite{bayartG} in 2006. An operator $T$ on a topological vector space $X$ is said to be {\it frequently hypercyclic} if there is a vector $x_0 \in X$, called frequently hypercyclic for $T$, such that
$\underline{\rm dens} \{n \in \N : \, T^n x_0 \in U\} > 0$
for any nonempty open subset $U$ of $X$, where $\underline{\rm dens} (A)$ represents the lower density of a subset $A \subset \N$, that is,
$$
\underline{\rm dens} (A) = \displaystyle{\liminf_{n \to \infty} {{\rm card} \, (A \cap \{1,2, \dots ,n\}) \over n}}.
$$
By replacing the iterates $T^n$ with the $n$th term $T_n$ of a sequence, we get the concept of a frequently hypercyclic sequence $(T_n)$ of operators, see \cite{BoniGr-b}.
As a stronger version of Godefroy--Shapiro's result, Bonilla and Grosse-Erdmann \cite{BoniGr-a} proved in 2006 that any non-scalar differential operator $\Phi (D)$ is, in fact, frequently hypercyclic.

The aim of this short note is to establish a criterion for the frequent hypercyclicity of sequences of convolution operators on the space of entire functions.
Such a criterion involves that the generating functions of the operators do not vanish on an appropriate annulus, in the boundary of which the modulus of each term of the sequence is in some sense controlled by the preceding ones or the following ones.
This will be carried out in Section 3, while Section 2 will be devoted to establish a number of auxiliary results on hypercyclicity, growth of entire functions, and Borel transform.

\section{Preliminary results.}

First of all, we recall a criterion for the frequent hypercyclicity of sequences of operators. The next sufficient condition
(Theorem \ref{Theorem Bonilla-Grosse}) was
established by Bonilla and Grosse-Erdmann, see \cite{BoniGr-b} and \cite[Chapter 9]{grosseP}. We need the notion of unconditional convergence. A series
$\sum_{n=1}^{\infty} x_n$ in a F-space $X$ with F-norm $\|\cdot\|$ is called {\it unconditionally convergent} if for any bijection
$\pi : \N \to \N$ the series $\sum_{n=1}^{\infty} x_{\pi (n)}$ converges. Conditions equivalent to unconditional convergence
can be found in \cite[3.8.2 and p.~153]{rolewicz} and \cite[3.3.8 and 3.3.9]{kamthangupta}. One of them, that will be used in the proof
of Theorem \ref{main theorem freq HC}, reads as follows: for any bounded sequence $(\alpha_n)$ of scalars, the series $\sum_{n=1}^{\infty} \alpha_n x_n$ converges.

Moreover, a family $\sum_{n=1}^\infty x_{n,k}$ ($k\in I\subset\N$) of series in $X$ is called unconditionally convergent {\em uniformly on $k$}, if given $\varepsilon>0$, there exists $N\in\N$ not depending on $k$, such that for every finite set $F\subset\N$ with $F\cap\{1,2,\cdots,N\}=\varnothing$ and every $k\in I$ it holds that $\left\|\sum_{n\in F}x_{n,k}\right\|<\varepsilon$.

\begin{theorem} \label{Theorem Bonilla-Grosse}
Let $X$ be an F-space, $Y$ a separable F-space
and $T_n : X \to Y$ a sequence of continuous linear mappings. Suppose that there are a dense subset $Y_0$ of $Y$
and mappings $S_n : Y_0 \to X$ such that, for all $y \in Y_0$, the following hold:
\begin{enumerate}[\rm (i)]
\item $\sum_{n=1}^{k} T_k S_{k-n} y$ converges unconditionally in $Y$, uniformly in $k \in \N$.
\item $\sum_{n=1}^{\infty} T_k S_{k+n} y$ converges unconditionally in $Y$, uniformly in $k \in \N$.
\item $\sum_{n=1}^{\infty} S_n y$ converges unconditionally in $Y$.
\item $T_n S_n y \to y$ as $n \to \infty$.
\end{enumerate}
Then the sequence $(T_n)$ is frequently hypercyclic.
\end{theorem}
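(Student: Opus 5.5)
The plan is the standard "disjoint sets of positive lower density" construction of Bayart--Grivaux and Bonilla--Grosse-Erdmann. I read (iii) as unconditional convergence in $X$, since $S_n y\in X$. Since $Y$ is separable and $Y_0$ is dense, fix a sequence $(y_j)_{j\ge1}$ in $Y_0$ that is dense in $Y$. Let $\|\cdot\|$ denote the F-norms of $X$ and $Y$. For each $j$, set $\varepsilon_j=2^{-j}$ and use (i), (ii), (iii) together with unconditional convergence (uniform in $k$ where stated) to choose $N_j\in\N$, increasing in $j$, with the following property. For every finite $F\subset\N$ with $F\cap\{1,\dots,N_j\}=\varnothing$ and every $k$,
\[
\Bigl\|\sum_{n\in F,\,n\le k} T_kS_{k-n}y_j\Bigr\|<\varepsilon_j,\quad
\Bigl\|\sum_{n\in F} T_kS_{k+n}y_j\Bigr\|<\varepsilon_j,\quad
\Bigl\|\sum_{n\in F} S_ny_j\Bigr\|<\varepsilon_j .
\]
These bounds pass to infinite $F$ by taking limits.

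Next I invoke the combinatorial lemma (e.g.\ \cite[Lemma 9.5]{grosseP}): given any sequence $(N_j)$, there are pairwise disjoint sets $A_j\subset\N$ with $\underline{\rm dens}(A_j)>0$ such that $\min A_j\ge N_j$ and $|n-m|\ge N_j+N_l$ whenever $n\in A_j$, $m\in A_l$, $n\neq m$. Define
\[
x=\sum_{j\ge1}\sum_{n\in A_j} S_n y_j .
\]
This converges in $X$. Each inner sum converges unconditionally by (iii) and has norm $<\varepsilon_j$, because $A_j\cap\{1,\dots,N_j\}=\varnothing$. The outer sum is therefore absolutely summable in the F-norm sense, so the whole double series converges unconditionally in $X$.

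Now fix $l$ and $n\in A_l$. By continuity of $T_n$,
\[
T_nx-y_l=(T_nS_ny_l-y_l)+\sum_{j}\;\sum_{m\in A_j,\,m\ne n} T_nS_my_j .
\]
For fixed $j$, split the inner sum into $m<n$, written $m=n-k$, and $m>n$, written $m=n+k$. The separation gives $k\ge N_j+N_l\ge N_j$, so the admissible $k$ form a set disjoint from $\{1,\dots,N_j\}$. By the uniform estimates from (i) and (ii), each half has norm $<\varepsilon_j$, uniformly in $n$. Summing over $j$ gives a contribution $\le 2\sum_j\varepsilon_j$. This is the main obstacle: it is only small if the tail bounds are arranged to be small in terms of $l$ as well. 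I would refine the choice by requiring the $j$-th estimate with $\varepsilon_{\max(j,l)}$-type bounds. Concretely, I would pick $N_j$ so that the bounds hold with $\varepsilon_j$ for all sets avoiding $\{1,\dots,N_j\}$, and use that $k\ge N_l$ as well. Then for $j\le l$ the $j$-th term is also controlled by the tail beyond $N_l\ge N_j$, and I would choose $N_l$ large enough that these finitely many tails ($j\le l$) are $<\varepsilon_l/l$. This gives $\|T_nx-y_l\|\le \|T_nS_ny_l-y_l\|+C\sum_{j\ge l}\varepsilon_j+2\varepsilon_l$.

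By (iv), $\|T_nS_ny_l-y_l\|\to0$ as $n\to\infty$ in $A_l$. So for each $l$ there is $n_l$ such that $\|T_nx-y_l\|<c\,2^{-l}$ for all $n\in A_l$ with $n\ge n_l$. Finally, given a nonempty open $U\subset Y$, pick $y_l\in U$ with $l$ large enough that the ball of radius $c\,2^{-l}$ about $y_l$ lies in $U$; this is possible by density of $(y_j)$ and the fact that each point of $Y$ is approximated by $y_j$ with arbitrarily large indices. Then $\{n:T_nx\in U\}\supset A_l\cap[n_l,\infty)$, which has positive lower density. Hence $x$ is frequently hypercyclic for $(T_n)$.
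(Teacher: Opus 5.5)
Your argument is correct. It is essentially the standard proof of Bonilla and Grosse-Erdmann, which the paper does not reproduce but only cites. The ingredients are the same: pairwise disjoint sets $A(l)$ of positive lower density with $\min A(l)\ge N_l$ and $|n-m|\ge N_k+N_l$; the vector $x=\sum_l\sum_{n\in A(l)}S_ny_l$; and, crucially, $N_l$ chosen so that the tails for all $y_j$ with $j\le l$ are below $\varepsilon_l/l$. Then $N_j+N_l\ge\max(N_j,N_l)$ makes the cross terms $O(\varepsilon_l+\sum_{j>l}\varepsilon_j)$.

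When you write it up, state this refined choice of $N_l$ from the start rather than the first choice you abandon. Also fix a harmless off-by-one: $\min A_j\ge N_j$ only guarantees $A_j\cap\{1,\dots,N_j-1\}=\varnothing$, so apply the lemma to $(N_j+1)_j$. Your reading of (iii) as convergence in $X$ is the right one.
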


In condition (i) we only have finite series, but one may understand them as
infinite series by adding zero terms. Moreover, it is understood that $S_0 =$ the identity on $Y_0$.

%

Now, we turn our attention to an important integral representation of entire functions.
Recall that the {\it growth order} of a function $\Phi
\in H(\C )$ is defined as \hfil\break
$\rho (\Phi ) := \limsup_{r \to \infty}
{\log \log M(r,\Phi )\over \log r}$, where $M(r,\Phi ) := \max
\{|f(z)|: \, |z| = r\}$ (see \cite{Boa}). If $\alpha \in
[0,+\infty )$, the {\it $\alpha$-type} of $\Phi$ is $\tau_\alpha
(\Phi ) := \limsup_{r \to \infty} {\log M(r,\Phi ) \over
r^\alpha}$. The {\it exponential type} of $\Phi$ is $\tau (\Phi )
:= \tau_1 (\Phi )$. Then $\Phi \in \mathcal{E}$ (that is, $\Phi$ is of exponential type) if and only
if $\tau (\Phi ) < +\infty$.

By the Malgrange-Ehrenpreis theorem (see \cite{BeG}, \cite{Ehr} or
\cite{Mal}), any nonzero differential operator $\Phi (D)$ is {\it surjective} and, in fact, it possesses a continuous right inverse.
In order to calculate such an inverse in the proof of our main theorem, we have at our disposal the tool of the Borel transform.

Assume that $\Phi \in \mathcal{E}$. The {\it Borel transform} of $\Phi$ is the function given
by the series
\begin{equation} \label{equation Borel transform}
(B \Phi )(z) = \sum_{n=0}^\infty {n! a_n \over z^{n+1}},
\end{equation}
provided that $\Phi (z) = \sum_{n=0}^\infty a_nz^n$. Since $\limsup_{n
\to \infty} |\Phi^{(n)}(0)|^{1/n} = \tau$ (see
\cite[pp.~11--12]{Boa}), the series in \eqref{equation Borel transform} defines an analytic
function in $\{z: \, |z| > \tau\}$. The next assertion,
usually refered to as P\'olya's representation, can be found in
\cite[pp.~73--74]{Boa}.

\begin{lemma} \label{Lemma representation Borel of Phi}
Suppose that $\Phi$ is an entire function of exponential type. Then,
for every $R > \tau (\Phi )$ and every $z \in \C$, we have
$$
\Phi (z) = {1 \over 2\pi i} \oint_{|t|=R} e^{zt} \, (B \Phi)(t) \, dt.
$$
\end{lemma}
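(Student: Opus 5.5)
The plan is to expand $e^{zt}$ in its power series and integrate term by term against the Laurent series of $B\Phi$ on the circle $|t|=R$.

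Fix $R>\tau(\Phi)$ and $z\in\C$. Write $\Phi(z)=\sum_{n\ge0}a_nz^n$. Since $\limsup_{n}|n!a_n|^{1/n}=\tau(\Phi)<R$, the series $\sum_n n!a_n t^{-n-1}$ converges absolutely and uniformly on $|t|=R$. Choose $\tau(\Phi)<R'<R$. Then $|n!a_n|\le C R'^n$ for all $n$, so on $|t|=R$ the terms are bounded by $C R'^n R^{-n-1}$, a convergent geometric series. Likewise, $e^{zt}=\sum_{m\ge0}z^mt^m/m!$ converges uniformly on the compact circle.

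Multiply the two series. The resulting double series $\sum_{m,n}\frac{z^m}{m!}\,n!a_n\,t^{m-n-1}$ is dominated on $|t|=R$ by
$$\Bigl(\sum_m |z|^mR^m/m!\Bigr)\Bigl(\sum_n CR'^nR^{-n-1}\Bigr)<\infty .$$
This domination justifies interchanging summation and integration, by uniform convergence or by dominated convergence applied to the arc-length parametrization.

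Now use the elementary identity $\frac1{2\pi i}\oint_{|t|=R}t^{k}\,dt=\delta_{k,-1}$. In the double series only the terms with $m=n$ survive, and each contributes $\frac{z^n}{n!}\,n!a_n=a_nz^n$. Summing over $n$ gives $\Phi(z)$, which is the claimed formula.

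The only step needing care is the justification of the interchange of sums and integral. That in turn rests on the root-test bound $|n!a_n|\le CR'^n$, which comes from the stated fact $\limsup|\Phi^{(n)}(0)|^{1/n}=\tau$ together with $\Phi^{(n)}(0)=n!a_n$. Once that bound is in place, the rest is routine.
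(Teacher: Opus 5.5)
Your proof is correct and complete: the bound $|n!a_n|\le C R'^n$ gives absolute and uniform convergence of the double series on $|t|=R$, and the relation $\frac{1}{2\pi i}\oint_{|t|=R}t^{k}\,dt=\delta_{k,-1}$ keeps exactly the diagonal terms $a_nz^n$. The paper gives no proof of its own and cites Boas, where the formula comes from the same termwise integration; there only the series for $B\Phi$ is expanded, and Cauchy's formula $\frac{1}{2\pi i}\oint_{|t|=R}e^{zt}t^{-n-1}\,dt=z^n/n!$ avoids the double series.
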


%


\section{Frequent hypercyclicity of sequences of differential operators.}

Now, we provide a general criterion of frequent hypercyclicity for sequences of differential operators on $H(\C )$ generated by entire functions satisfying appropriate conditions.

\vskip 2pt

For any $R_1,R_2, \dots, R_p \in (0,+\infty )$ we shall denote by
$A(R_1, \dots ,R_p)$ the closed annulus centered at the origin determined by the radii $R_i$ ($i=1, \dots ,p$), that is,
\[A(R_1, \dots ,R_p) = \{z \in \C : \, \rho \le |z| \le \sigma\},\]
where $\rho := \min \{R_1, \dots ,R_p\}$ and $\sigma := \max \{R_1, \dots ,R_p \}$.

\begin{theorem} \label{main theorem freq HC}
Let $(\Phi_n)$ be a sequence of 
entire functions of exponential type.
Denote by $\Phi_0$ the constant function $1$.
Assume that there exist positive reals $R_1,R_2,R_3,$ \break
$\alpha_{n,j} \,(n > j \ge 0), \,  \beta_{n,j} \, (n > j \ge 1)$ satisfying the following conditions:
\begin{enumerate}[\rm (a)]
\item  $\displaystyle{\sup_{n \in \N} \sum_{j=0}^{n-1} \alpha_{n,j}} < +\infty$ and $\displaystyle{\sup_{j \in \N} \sum_{n=j+1}^{\infty} \beta_{n,j}} < +\infty$.
\item $\Phi_n(t) \ne 0$ for all $t \in A(R_1,R_2,R_3)$ and all $n \in \N$.
\item $|\Phi_n(t)| \le \alpha_{n,j} |\Phi_j(t)|$ for all $t \in \{|z|=R_1\}$ and all $n > j \ge 0$.
\item $|\Phi_j(t)| \le \beta_{n,j} |\Phi_n(t)|$ for all $t \in \{|z|=R_2\}$ and all $n > j \ge 1$.
\item The series $\displaystyle{\sum_{n=1}^{\infty} {1 \over \min_{|t| = R_3} |\Phi_n(t)| } }$ converges.
\end{enumerate}
Then the sequence of differential operators $(\Phi_n (D))$ is frequently hypercyclic on $H(\C )$.
\end{theorem}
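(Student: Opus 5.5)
We will apply Theorem \ref{Theorem Bonilla-Grosse} with $X=Y=H(\C)$, $T_n=\Phi_n(D)$ and $Y_0$ the linear span of the exponentials $e_\lambda(z)=e^{\lambda z}$ with $\lambda$ in a suitable set. The natural choice is to let $\lambda$ range over the interior $G$ of the annulus $A(R_1,R_2,R_3)$. If this annulus is degenerate (all radii equal), we use instead a small open arc of the circle $\{|z|=R_1\}$, or exploit the nonvanishing on a slightly larger annulus; in both cases the set of $\lambda$ has an accumulation point. The span of $\{e_\lambda:\lambda\in \Lambda\}$ is dense in $H(\C)$ whenever $\Lambda$ has an accumulation point. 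This follows by Hahn--Banach: a functional annihilating all $e_\lambda$ has an entire Borel--Laplace transform vanishing on $\Lambda$, hence identically, so the functional is zero.

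Since $\Phi(D)e_\lambda=\Phi(\lambda)e_\lambda$, the naive inverse $S_ne_\lambda=\Phi_n(\lambda)^{-1}e_\lambda$ makes (iv) trivial: $T_nS_ne_\lambda=e_\lambda$. However, (i)--(iii) then require pointwise control at each $\lambda$, while hypotheses (c), (d), (e) only hold on the three separate circles. So we must define $S_n$ by a contour integral. For $y\in Y_0$ write $y=\sum_k c_k e_{\lambda_k}$, and for each fixed $\lambda\in G$ use Cauchy's formula on the annulus (where every $\Phi_n$ is zero-free by (b)):
\[
e_\lambda(z)=\frac1{2\pi i}\oint_{|t|=\sigma}\frac{e^{zt}}{t-\lambda}\,dt-\frac1{2\pi i}\oint_{|t|=\rho}\frac{e^{zt}}{t-\lambda}\,dt .
\]
We then deform the two contours independently to the radii $R_1,R_2,R_3$ as needed. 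Because $\Phi_n(t)^{-1}e^{zt}/(t-\lambda)$ is holomorphic in $t$ on the annulus minus $\lambda$, we can define
\[
S_n e_\lambda(z)=\frac1{2\pi i}\oint_{\Gamma}\frac{e^{zt}}{\Phi_n(t)(t-\lambda)}\,dt ,
\]
where $\Gamma$ is the cycle around $\lambda$. Different pieces of the expression for $T_kS_m$ are evaluated on different circles, and residue calculus guarantees these choices all agree.

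The operators act under the integral: $\Phi_k(D)e^{zt}=\Phi_k(t)e^{zt}$, justified by Lemma \ref{Lemma representation Borel of Phi} and uniform convergence on compacta. Hence
\[
T_kS_m e_\lambda(z)=\frac1{2\pi i}\oint \frac{\Phi_k(t)}{\Phi_m(t)}\frac{e^{zt}}{t-\lambda}\,dt .
\]
For $k>m$ we move the contour to $|t|=R_1$. There (c) gives $|\Phi_k/\Phi_m|\le\alpha_{k,m}$, and the integrand is bounded on compacta by $C\alpha_{k,m}$, with $C$ depending on the compact set and on $\lambda$ but not on $k,m$. Then (a) makes $\sum_{n}\alpha_{k,k-n}$ bounded uniformly in $k$. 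For $k<m$ we move to $|t|=R_2$ and use (d) with $\beta_{m,k}$. For $S_n$ itself we use $|t|=R_3$ and (e). Finally, (iv) follows from $T_nS_ne_\lambda=e_\lambda$ by the residue theorem.

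The main obstacle is that bounded partial sums are not enough: unconditional convergence uniform in $k$ needs tails that are small uniformly in $k$, while (a) gives only $\sup_k\sum_j\alpha_{k,j}<\infty$. To get around this, we use the freedom to choose the contour radius in the integral, combined with the cut-off: the Cauchy kernel contributes $\lambda$-dependent factors only. The honest route is to check the paper's intended meaning, namely to bound $\|\sum_{n\in F}T_kS_{k\pm n}y\|$ by $C\sum_{n\in F}\alpha$ or $C\sum_{n\in F}\beta$. If the paper's Theorem \ref{Theorem Bonilla-Grosse} is applied only with the summability in (a), the bound will be stated in that form. I would first try to show that the tails are uniformly small by interpolating between two nearby circles, taking a convex combination of the estimates (log-convexity of $|\Phi_k/\Phi_m|$ across the annulus via Hadamard's three circles theorem). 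This should produce a geometric gain, provided $R_3$ lies strictly between the other radii.
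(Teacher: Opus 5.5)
There is a genuine gap, and it lies in the choice of $Y_0$. For $\lambda$ in the open annulus, the integrand $e^{zt}/(\Phi_n(t)(t-\lambda))$ has a pole at $t=\lambda$. Your cycle must therefore consist of one circle of radius $>|\lambda|$ and one of radius $<|\lambda|$. Neither can be moved across $|t|=|\lambda|$ without picking up the residue, so the whole cycle can never sit on $|t|=R_1$ (or on $R_2$ or $R_3$), and residue calculus does not make the different choices agree. In fact the residue theorem gives exactly $S_ne_\lambda=\Phi_n(\lambda)^{-1}e_\lambda$: your contour definition \emph{is} the naive inverse. Consequently $T_kS_me_\lambda=\bigl(\Phi_k(\lambda)/\Phi_m(\lambda)\bigr)e_\lambda$, so every estimate needs control at interior points of the annulus, which (c)--(e) do not provide.

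This already fails for $\Phi_n(z)=z^n$, which satisfies (a)--(e) with $R_1=1/2$ and $R_2=R_3=2$. There $T_kS_me_\lambda=\lambda^{k-m}e_\lambda$, and conditions (i) and (ii) of Theorem~\ref{Theorem Bonilla-Grosse} force $|\lambda|<1$ and $|\lambda|>1$ respectively.

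The missing idea is to use test functions whose Borel transform is singular only inside the hole of the annulus. The paper takes $Y_0$ to be the polynomials, for which $BP$ is a polynomial in $1/t$, and sets $S_nP(z)=\frac1{2\pi i}\oint_{|t|=R}e^{zt}(BP)(t)\,\Phi_n(t)^{-1}\,dt$. This is a single circle with no pole in the annulus, hence independent of $R\in[\rho,\sigma]$ by (b). Then $T_kS_mP$ is the same integral with $\Phi_k/\Phi_m$ inserted, valid on any such circle, and one uses $R_1$ for $k>m$, $R_2$ for $k<m$, and $R_3$ for $\sum_n S_nP$. Exponentials with $|\lambda|<\min_i R_i$ would also work, but then $S_ne_\lambda\neq e_\lambda/\Phi_n(\lambda)$.

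Your second concern is well founded. Hypothesis (a) gives bounded sums, not uniformly small tails, and the paper's estimates for (i) and (ii) only control $\sup_k\sum_n$; in (ii) it bounds $\sum_n\sup_k$ by $\sup_k\sum_n$, which is the wrong direction. However, the three-circles remedy cannot work. For $k>m$ you have an upper bound on $|\Phi_k/\Phi_m|$ only on $|t|=R_1$, while (d) gives a \emph{lower} bound on $|t|=R_2$, so there is nothing to interpolate.

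The repair is elementary and pointwise:
\begin{itemize}
\item On $|t|=R_1$: fix such a $t$ and put $y_j=1/|\Phi_j(t)|$, with $y_0=1$. By (c) and (a), $\sum_{j<n}y_j\le A\,y_n$, where $A=\sup_n\sum_{j<n}\alpha_{n,j}$. Hence $s_n=\sum_{j\le n}y_j$ satisfies $s_n\ge(1+1/A)s_{n-1}$, which gives $|\Phi_n(t)/\Phi_j(t)|=y_j/y_n\le(A+1)\bigl(\tfrac{A}{A+1}\bigr)^{n-j}$ for all $n>j\ge0$.
\item On $|t|=R_2$: the tails $r_j=\sum_{n\ge j}1/|\Phi_n(t)|$ satisfy $r_{j+1}\le\frac{B}{B+1}r_j$, where $B=\sup_j\sum_{n>j}\beta_{n,j}$. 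This gives $|\Phi_j(t)/\Phi_n(t)|\le(B+1)\bigl(\tfrac{B}{B+1}\bigr)^{n-j}$.
\end{itemize}
These bounds depend only on $n-j$ and are summable. Used with the paper's single-circle $S_n$, they give the uniform tails required in (i) and (ii).
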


\begin{proof}
Let us apply Theorem \ref{Theorem Bonilla-Grosse} with the following cast of characters:

\centerline{$X := H(\C ) =: Y$, $Y_0 := \{$polynomials$\}$, $T_n := \Phi_n (D)$ ($n \in \N$).}

\noindent Our goal is to define the mappings $S_n : Y_0 \to X$ so that
all four conditions (i) to (iv) in the mentioned theorem are fulfilled.

Since $\tau (P) = 0$ for every polynomial $P$, it follows from Lemma \ref{Lemma representation Borel of Phi} that for all $R > 0$ we have
\begin{equation} \label{Eq Borel polynomial}
P(z) = {1 \over 2\pi i} \oint_{|t|=R} e^{zt} \, (B P)(t) \, dt \hbox{ \ for all \ } z \in \C \hbox{ \ and all \ } P \in Y_0.
\end{equation}
Let $R \in [\rho , \sigma ]$, where $\rho = \min \{R_1,R_2,R_3\}$ and $\sigma = \max \{R_1,R_2,R_3\}$. Thanks to (b), we have $\Phi_n(t) \ne 0$ ($n \ge 1$)
whenever $|t| = R$. We then define $S_n : Y_0 \to X$ by
\begin{equation} \label{Eq def Sn}
(S_n P)(z) := {1 \over 2\pi i} \oint_{|t| = R} {e^{zt} \, (B P)(t) \over \Phi_n(t)} \, dt \hbox{ \ for all } z \in \C .
\end{equation}
Notice that, by well-known results on parametric integration, the last formula defines a genuine entire function.
Moreover, thanks to the Cauchy integral theorem and the assumption (b), the definition of $S_n$ in \eqref{Eq def Sn} does not depend on $R$.
In particular, we are allowed to take $R = R_1,R_2,R_3$.

Fix $n \in \N$ and let $\Phi_n(z) = \sum_{j=0}^{\infty} h_j z^j$. Let us use \eqref{Eq Borel polynomial} (with $R \in [\rho , \sigma ]$) and \eqref{Eq def Sn}.
Then for every $P \in Y_0$ and every $z \in \C$ we obtain that
\begin{equation*}
\begin{split}
(T_n S_n P)(z) &= \sum_{j=0}^\infty h_j (S_n P)^{(j)}(z) = \sum_{j=0}^\infty {h_j \over 2\pi i} \oint_{|t|=R} t^j e^{zt} {(B P)(t) \over \Phi_n(t)} \, dt \\
               &= {1 \over 2\pi i} \oint_{|t|=R} \left( \sum_{j=0}^\infty h_j t^j \right) e^{zt} {(BP)(t) \over \Phi_n(t)} \, dt \\
               &= {1 \over 2\pi i} \oint_{|t|=R} e^{zt} (BP)(t) \, dt = P(z).
\end{split}
\end{equation*}
Note that, in order to get the interchange of integration and summation at
the third equality, we have used $\sum_{j=0}^\infty \oint_{|t|=R}
|{h_j \over 2\pi i} t^j e^{zt} {(Bf)(t) \over \Phi_n(t)}| \, dt < +
\infty$, which follows easily from $|h_j|^{1/j} \to 0$, that in turn
holds because $\Phi_n$ is entire.

Summarizing, we have obtained for all $P \in Y_0$ that $T_n S_n P = P \to P$ ($n \to \infty$). This is (iv) in Theorem \ref{Theorem Bonilla-Grosse}.

Let us show condition (iii) in the mentioned theorem.  According to (e), we have
$$
\sum_{n=1}^{\infty} {1 \over \min_{|t| = R_3} |\Phi_n(t)| }  < +\infty .
$$
Fix a bounded sequence $(c_n)$ of scalars. It should be proved that, for each polynomial $P$, the series
$\sum_{n=1}^{\infty} c_n S_n P$ converges compactly in $\C$. In turn, plainly, it suffices to prove that, for every prescribed $K > 0$, we have
$\displaystyle{\sum_{n=1}^{\infty} \sup_{|z| \le K} |(S_n P) (z)|} < +\infty$. With this aim, observe that (e) implies
\begin{equation*}
\begin{split}
\sum_{n=1}^{\infty} \sup_{|z| \le K} |(S_n P) (z)| &= \sum_{n=1}^{\infty} \sup_{|z| \le K} \left| {1 \over 2\pi i} \oint_{|t|=R_3} {e^{zt} \, (B P)(t) \over \Phi_n(t)} \, dt
                                                                                                                                                                 \right| \\
&\le {1 \over 2\pi} \cdot 2 \pi R_3 \cdot \sum_{n=1}^{\infty} \sup_{|z| \le K} \sup_{|t| = R_3} \left| {e^{zt} \, (B P)(t) \over \Phi_n(t)} \right| \\                                                &\le R_3 \cdot e^{K R_3} \cdot \sup_{|t|=R_3} |(B P)(t)| \cdot \sum_{n=1}^{\infty} {1 \over \min_{|t| = R_3} |\Phi_n(t)| }  < +\infty ,
\end{split}
\end{equation*}
from which the desired result comes.

Now, we pass to prove condition (ii) in Theorem \ref{Theorem Bonilla-Grosse}. Fix a polynomial $P$ and a bounded sequence $(c_n)$ of scalars.
We want to show that the series $\sum_{n=1}^{\infty} c_n T_k S_{n+k} P$ converges compactly in $\C$ uniformly in $k \in \N$.
For this, it is enough to prove that, for every $K > 0$, the series
$\displaystyle{\sum_{n=1}^{\infty} \sup_{k \in \N} \sup_{|z| \le K} |(T_k S_{n+k}) (z)|}$ converges.
A similar argument to that used above to reach (iv) leads us to
$$
(T_k S_{n+k} P)(z) = {1 \over 2\pi i} \oint_{|t|=R_2} e^{zt} (BP)(t) \cdot {\Phi_k (t) \over \Phi_{n+k} (t)} \, dt \hbox{ \ for all } z \in \C.
$$
Therefore,
\begin{equation*}
\begin{split}
\sum_{n=1}^{\infty} \sup_{|z| \le K \atop k \in \N} |(T_k S_{n+k} P) (z)| &= \sum_{n=1}^{\infty} \sup_{|z| \le K \atop k \in \N}
\left| {1 \over 2\pi i} \oint_{|t|=R_2} e^{zt} (BP)(t) \cdot {\Phi_k (t) \over \Phi_{n+k} (t)} \, dt \right| \\
&\le {1 \over 2\pi} \cdot 2 \pi R_2 \cdot \sum_{n=1}^{\infty} \sup_{|z| \le K, \, |t| = R_2 \atop k \in \N} \left| {e^{zt} \, (B P)(t) \Phi_k(t) \over \Phi_{n+k}(t)} \right| \\                                                &\le R_2 \cdot e^{K R_2} \cdot \sup_{|t|=R_2} |(B P)(t)| \cdot \sup_{k \in \N} \sum_{n=1}^{\infty} \beta_{n+k,k} \\
&= R_2 \cdot e^{K R_2} \cdot \sup_{|t|=R_2} |(B P)(t)| \cdot \sup_{k \in \N} \sum_{n=k+1}^{\infty} \beta_{n,k} < +\infty,
\end{split}
\end{equation*}
where we have used (d) in the second inequality, and then (a) to conclude the finiteness of the last series.
So, condition (ii) is proved.

Finally, we are going to show that condition (i) in Theorem \ref{Theorem Bonilla-Grosse} holds for our sequence $(T_n)$.
Fix, again, a polynomial $P$ and a bounded sequence $(c_n)$ of scalars.
Our goal is to prove that $\sum_{n=1}^{k} c_n T_k S_{k-n} P$ converges compactly in $\C$ uniformly in $k \in \N$.
It suffices to show that, for every $K > 0$, the extended positive number 
$\displaystyle{\sup_{k \in \N} \sum_{n=1}^{k} \sup_{|z| \le K} |(T_k S_{k-n}) (z)|}$ is finite.
Let us remark that the convention $S_0 =$ the identity is consistent with \eqref{Eq def Sn} if one takes into account \eqref{Eq Borel polynomial} and the fact $\Phi_0 \equiv 1$.
As in the preceding parts, we obtain the following estimations, where (c) and the first part of (a) are implemented:
\begin{equation*}
\begin{split}
\sup_{k \in \N} \sum_{n=1}^{k} \sup_{|z| \le K} |(T_k S_{k-n}) (z)| &=
\sup_{k \in \N} \sum_{n=1}^{k} \sup_{|z| \le K} \left| {1 \over 2\pi i} \oint_{|t|=R_1} e^{zt} (BP)(t) \cdot {\Phi_k (t) \over \Phi_{k-n} (t)} \, dt \right| \\
&\le {1 \over 2\pi} \cdot 2 \pi R_1 \cdot \sup_{k \in \N} \sum_{n=1}^{k} \sup_{|z| \le K \atop |t| = R_1} \left| {e^{zt} \, (B P)(t) \Phi_k(t) \over \Phi_{k-n}(t)} \right| \\                                                &\le R_1 \cdot e^{K R_1} \cdot \sup_{|t|=R_1} |(B P)(t)| \cdot \sup_{k \in \N} \sum_{n=1}^{k} \beta_{k,k-n} \\
&= R_1 \cdot e^{K R_1} \cdot \sup_{|t|=R_1} |(B P)(t)| \cdot \sup_{k \in \N} \sum_{n=0}^{k-1} \beta_{k,n} < +\infty .
\end{split}
\end{equation*}
Hence, condition (i) is proved and the proof of our theorem is concluded.
\end{proof}

It is fair to say that, at first glance, the conditions (a) to (e) assumed in the previous theorem might have been
conceived {\it ad hoc} in order to apply Theorem \ref{Theorem Bonilla-Grosse}.
Nevertheless, we are going to see in the following result how Theorem \ref{main theorem freq HC} can be used to give
concrete examples of frequently hypercyclic sequences of
convolution operators whose members are not the iterates of a single operator.


\begin{theorem} \label{freq HC special cases}
Let $(c_n)$ be a sequence of nonzero complex scalars satisfying
\begin{equation} \label{Eq liminf limsup}
\gamma := \liminf_{n \to \infty} \left| {c_{n+1} \over c_n}  \right| > 0 \hbox{ \ and \ } \delta := \limsup_{n \to \infty} \left| {c_{n+1} \over c_n}  \right| < +\infty .
\end{equation}
Then the following holds:
\begin{enumerate}[\rm (i)]
\item Assume that $\Phi \in \mathcal{E}$ is a function for which there exist $R_1,R_2 \in (0,+\infty)$ such that $|\Phi (t)| < 1/\delta$ on $|t| = R_1$,
               $|\Phi (t)| > 1/\gamma$ on $|t| = R_2$, and $\Phi (t) \ne 0$ for all $t \in A(R_1,R_2)$. Then the sequence of differential operators
               $(c_n \Phi(D)^n)$ is frequently hypercyclic on $H(\C )$.
\item The sequence of differential operators $(c_n D^n)$ is frequently hypercyclic on $H(\C )$.
\end{enumerate}
\end{theorem}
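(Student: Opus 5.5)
The plan is to deduce both parts from Theorem \ref{main theorem freq HC} with $\Phi_n := c_n \Phi^n$ for $n \ge 1$. We set $c_0 := 1$, so that $\Phi_0 = c_0\Phi^0 \equiv 1$ is consistent with that theorem. Each $\Phi_n$ is of exponential type because $\mathcal{E}$ is closed under products and scalar multiples. Part (ii) will then be the special case $\Phi(z) = z$.

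\textbf{Setup.} Take the $R_1, R_2$ of the hypothesis of (i) and put $R_3 := R_2$. Then condition (b) is immediate: $\Phi_n = c_n\Phi^n$ with $c_n \ne 0$, and $\Phi \ne 0$ on $A(R_1,R_2) = A(R_1,R_2,R_3)$. Set
$$m_1 := \max_{|t|=R_1}|\Phi(t)| < 1/\delta, \qquad m_2 := \min_{|t|=R_2}|\Phi(t)| > 1/\gamma,$$
using compactness of the circles. Choose $\delta' > \delta$ with $q_1 := \delta' m_1 < 1$, and $\gamma' \in (0,\gamma)$ with $q_2 := \gamma' m_2 > 1$.

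\textbf{Key step: uniform geometric ratio bounds.} The main point is the following claim, which is the only place needing care. There is a constant $M \ge 1$ such that
$$|c_n/c_j| \le M\,\delta'^{\,n-j} \quad\text{and}\quad |c_j/c_n| \le M\,\gamma'^{-(n-j)} \qquad \text{for all } n > j \ge 0 .$$
By the definition of $\delta$ and $\gamma$, there is $N_0$ with $\gamma' \le |c_{k+1}/c_k| \le \delta'$ for all $k \ge N_0$. Telescoping gives both bounds with $M = 1$ when $j \ge N_0$. When $j < N_0 \le n$, we write $c_n/c_j = (c_n/c_{N_0})(c_{N_0}/c_j)$. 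The second factor ranges over finitely many values, and these are finite and nonzero because all $c_k \ne 0$. When $n < N_0$ only finitely many pairs occur. Enlarging $M$ absorbs all these cases.

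\textbf{Verifying (a)--(e).} On $|t| = R_1$ we have
$$|\Phi_n(t)| = |c_n/c_j|\,|\Phi(t)|^{n-j}\,|\Phi_j(t)| \le M q_1^{\,n-j}|\Phi_j(t)|.$$
So (c) holds with $\alpha_{n,j} := M q_1^{n-j}$, and this also covers $j = 0$. Similarly, on $|t| = R_2$,
$$|\Phi_j(t)| \le |c_j/c_n|\,|\Phi(t)|^{-(n-j)}\,|\Phi_n(t)| \le M q_2^{-(n-j)}|\Phi_n(t)|,$$
so (d) holds with $\beta_{n,j} := M q_2^{-(n-j)}$. Both families are dominated by convergent geometric series independent of $n$ (respectively $j$), which gives (a). For (e), apply the second ratio bound with $j = 0$: $|c_n| \ge M^{-1}\gamma'^{\,n}$. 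Hence
$$\min_{|t|=R_3}|\Phi_n(t)| \ge M^{-1} q_2^{\,n},$$
and its reciprocal is summable. Theorem \ref{main theorem freq HC} then yields (i).

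\textbf{Part (ii).} Take $\Phi(z) = z$, $0 < R_1 < 1/\delta$ and $R_2 > \max\{1/\gamma, R_1\}$. Then $|\Phi| = R_1 < 1/\delta$ on $|t| = R_1$ and $|\Phi| = R_2 > 1/\gamma$ on $|t| = R_2$. Also $z \ne 0$ on $A(R_1,R_2)$ since $R_1 > 0$. Part (i) applies directly, giving the frequent hypercyclicity of $(c_n D^n)$.
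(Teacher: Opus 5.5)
Your proof is correct and follows the same route as the paper: you apply Theorem \ref{main theorem freq HC} to $\Phi_n = c_n\Phi^n$ with $R_3 = R_2$ and geometric $\alpha_{n,j}$, $\beta_{n,j}$ obtained by telescoping the ratios $c_{k+1}/c_k$, and then specialize to $\Phi(z)=z$ for (ii). Your uniform constant $M$ handles the finitely many initial indices more cleanly than the paper's ``delete finitely many terms and assume $n_0=1$'' reduction. That reduction, together with $c_0:=1$, does not by itself control $|c_1|=|c_1/c_0|$ in the $j=0$ case; the gap is harmless, since only a constant factor is lost, and your $M$ makes this explicit.
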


\begin{proof}
Observe that (ii) is derived from (i) just taking $\Phi (z) := z$, $R_1 = {1 \over 2\delta}$ and $R_2 := {2 \over \gamma}$.
Therefore, it is enough to prove (i).

With this aim, consider the functions $\Phi_n(z) := c_n \Phi^n (z)$ $(n = 1,2, \dots )$, so that $\Phi_n(D) = c_n \Phi (D)^n$.
It follows from \eqref{Eq liminf limsup}, the continuity of $|\Phi |$ and the compactness of $|t| = R_i$ $(i=1,2)$
that there exist $\mu, \nu > 0$ such that $\mu > \delta$, $\nu < \gamma$, $|\Phi (t)| \le {1 \over \mu}$ on $|t| = R_1$,
and $|\Phi (t)| \ge {1 \over \nu}$ on $|t| = R_2$. Fix $\mu' \in (\delta , \mu)$ and $\nu' \in (\nu ,\gamma )$.
Then there is
$n_0 \in \N$ such that
$\nu' < \left|{c_{n+1} \over c_n} \right| < \mu'$ for all $n \ge n_0$. By deleting, if necessary, finitely many terms from our sequence of operators, we can
assume without loss of generality that $n_0 = 1$. We set $c_0 := 1$ and $\Phi_0 (z) := 1$.
Then we have for all $n > j \ge 0$ and all $t \in \{|z|= R_1\}$ that
$$
|\Phi_n(t)| = |c_n \Phi^n(t)| = \left| {c_n \over c_{n-1}} \cdot {c_{n-1} \over c_{n-2}} \cdots {c_{j+1} \over c_j} \cdot \Phi^{n-j}(t) \right| \cdot |c_j \cdot \Phi^j(t)|
\le \left( {\mu' \over \mu} \right)^{n-j} |\Phi_j(t)|.
$$
Analogously, we obtain for all $n > j \ge 0$ and all $t \in \{|z|= R_2\}$ that
$$
|\Phi_n(t)| = |c_n \Phi^n(t)| = \left| {c_n \over c_{n-1}} \cdot {c_{n-1} \over c_{n-2}} \cdots {c_{j+1} \over c_j} \cdot \Phi^{n-j}(t) \right| \cdot |c_j \cdot \Phi^j(t)|
            \ge \left( {\nu' \over \nu} \right)^{n-j} |\Phi_j(t)|.
$$

In particular, $\min_{|t|=R_3} |\Phi_n (t)| \ge \left( {\nu' \over \nu} \right)^{n}$, where $R_3 := R_2$. Then $\Phi_n(t) \ne 0$ for all $n \in \N$ and all
$t \in A(R_1,R_2,R_3) = A(R_1,R_2)$, which is (b) in Theorem \ref{main theorem freq HC}. Moreover, (e) holds because, since $\nu' > \nu$, the series
$\sum_{n=1}^{\infty} {1 \over \min_{|t|=R_3} |\Phi_n (t)|}$ converges.
Now, letting
$$
\alpha_{n,j} := \left( {\mu' \over \mu} \right)^{n-j} \hbox{ \ and \ } \beta_{n,j} := \left( {\nu \over \nu'} \right)^{n-j},
$$
we get (c) and (d) satisfied in Theorem \ref{main theorem freq HC}.

Finally, we will check the remaining condition (a). Taking into account that ${\mu' \over \mu} < 1$ and  ${\nu \over \nu'} < 1$, we get:
\begin{itemize}
\item $\displaystyle{\sup_{n \in \N}} \sum_{j=0}^{n-1} \alpha_{n,j} = \displaystyle{\sup_{n \in \N}} \sum_{j=0}^{n-1} \left( {\mu' \over \mu} \right)^{n-j}
= \displaystyle{\sup_{n \in \N}} \sum_{j=1}^{n} \left( {\mu' \over \mu} \right)^{j} = \sum_{j=1}^{\infty} \left( {\mu' \over \mu} \right)^{j} = {\mu' \over \mu - \mu'} < +\infty .$
\item $\displaystyle{\sup_{j \in \N}} \sum_{n=j+1}^{\infty} \beta_{n,j} = \displaystyle{\sup_{j \in \N}} \sum_{n=j+1}^{\infty} \left( {\nu \over \nu'} \right)^{n-j}
= \sum_{n=1}^{\infty} \left( {\nu \over \nu'} \right)^{n} = {\nu \over \nu' - \nu} < +\infty .$
\end{itemize}
The proof is finished.
\end{proof}

Part (ii) of the preceding theorem furnishes a collection of frequently hypercyclic sequences of finite order differential operators. Let us provide an example made with infinite order ones. Consider $T_n := \log n \cdot \left(D + {\mathcal{T} \over 3}\right)^n$ $(n \in \N)$, where $\mathcal{T}$ is the $1$-translation operator $(\mathcal{T}f)(z) := f(z+1)$.
Then $D + {\mathcal{T} \over 3} = \Phi (D)$ with $\Phi (z) := z + {e^z \over 9}$. Moreover, if $c_n := \log n$, then under the notation of \eqref{Eq liminf limsup} we have $0 < 1 = \gamma = \delta < +\infty$. Let us apply Rouch\'e's theorem (see, e.g., \cite[p.~125]{conway}) on the circles $\gamma_1 \equiv |z| = 1/2$
and $\gamma_2 \equiv |z| = 2$. We have that the origin is the only zero of $z$ and $|e^z/9| < 2/9 < 1/2 = |z|$ on $\gamma_1$, while in $\gamma_2$ it holds that
$|e^z/9| < 8/9 < 2 = |z|$ on $\gamma_2$. Then $\Phi$ has a unique zero in $|z| \le 1$ and $\Phi (z) \ne 0$ for all $z \in A(1,2)$. In addition,
$|\Phi (t)| \le |z| + |e^z/9| < 1/2 + 2/9 < 1 = 1/\delta$ on $\gamma_1$, and $|\Phi (t)| \ge |z| - |e^z/9| > 2 - 8/9 > 1 = 1/\gamma$ on $\gamma_2$.
Thus, Theorem \ref{freq HC special cases}(i) applies, so yielding the frequent hypercyclicity of $(T_n)$.

We also provide an example of a frequently hypercyclic sequence $(T_n)$ of convolution operators being not of the form $(c_n \Phi (D)^n)$.
If $\mathcal{T}$ denotes, again, the $1$-translation operator, we define $T_n := 5^n D^n + 9^{-n}\mathcal{T}^n$ $(n \in \N )$.
Then $T_n = \Phi_n(D)$, where $\Phi_n(z) = 5^nz^n + 9^{-n}e^{nz}$.
Now, apply Rouch\'e's theorem on the closed arcs $\gamma_1 \equiv |z| = 1/15$
and $\gamma_2 \equiv |z| = 1$ to conclude that $\Phi_n(t) \ne 0$ for all $t \in A(1/15,1)$. Let $\Phi_0(z) := 1$.
Simple calculations ---\,the details are left to the interested reader\,--- lead us to $|\Phi_n(t)| \le 2^{j-n} |\Phi_j(t)|$
for all $t \in \{|z|=1/15\}$ and all $n > j \ge 0$, and
$|\Phi_j(t)| \le 2^{j-n} |\Phi_n(t)|$ for all $t \in \{|z|=1\}$ and all $n > j \ge 0$. Consequently, conditions (a) to (e)
in Theorem \ref{main theorem freq HC} are fulfilled by selecting $R_1 = 1/15$, $R_2 = 1 = R_3$, and so $(T_n)$ is frequently hypercyclic.

\section{Final remarks and questions.}

\noindent 1. Assume that $\Phi \in \mathcal{E}$ satisfies the assumptions in Theorem \ref{freq HC special cases}(i).
Taking into account that $\liminf_{n \to \infty} \left|{ c_{n+1}  \over c_n} \right| \le \liminf_{n \to \infty} |c_n|^{1/n} \le \limsup_{n \to \infty} |c_n|^{1/n} \le \liminf_{n \to \infty} \left|{ c_{n+1}  \over c_n} \right|$,
the following question arises naturally:

\begin{quote}
  {\it Does $0 < \liminf_{n \to \infty} |c_n|^{1/n} \le \limsup_{n \to \infty} |c_n|^{1/n} < +\infty$ suffice to\\ guarantee frequent hypercyclicity for $(c_n\Phi (D))$?}
\end{quote}

\noindent This should be compared with Theorem 2.6 in \cite{bernalprado} containing the next assertion.
Assume that at least one of the following properties is fulfilled:
\begin{itemize}
\item $(|c_n|^{1/n})$ does not converge to zero and $\Phi (0) = 0$.
\item $0 < \liminf_{n \to \infty} |c_n|^{1/n} \leq \limsup_{n \to \infty} |c_n|^{1/n} < +\infty$.
\end{itemize}

\noindent Then the sequence $(c_n \Phi(D)^n)$ is hypercyclic on $H(\C )$.

\vskip 6pt

\noindent 2. In \cite[Theorem 4]{Be2} it is shown that $(c_nD^n)$ is hypercyclic on $H(\C )$ if and only if the sequence $(n|c_n|^{1/n})$ is unbounded.
This is evidently not enough for frequent hypercylicity, because we might have that $\{n|c_n|^{1/n} : \, n \in S\}$ is bounded, $S$ being a subset of $\N$
such that $\underline{\rm dens} (\N \setminus S) = 0$. By reinforcing the hypothesis, we pose the following problem:

\begin{quote} {\it If $\lim_{n \to \infty} n|c_n|^{1/n} = \infty$, is $(c_nD^n)$ frequently hypercyclic?}
\end{quote}

\noindent 3. Several hypercyclicity criteria for sequences of operators of the form $(c_n(\cdot ) D^n)$ can be found in \cite{Cal}, where this time
the $c_n$'s are entire functions. A corresponding study of frequent hypercyclicity could be interesting.

\end{document}